\newtheorem{theorem}{Theorem}[section]
\newtheorem{corollary}[theorem]{Corollary}
\theoremstyle{remark}
\newtheorem{example}[theorem]{Example}
\begin{document}
\title{Random space and plane curves}

\author{Igor Rivin}

\date{\today}
\thanks{The author would like to thank the Institute for Advanced Study for its hospitality during the preparation of this paper. The results in this paper resolve a 30-odd year old question of Bill Thurston's - the author remains grateful to Thurston for this, and many other lessons. The author is grateful to Peter Sarnak, Curt McMullen, Robin Pemantle, and Stephen Wolfram for their comments.}
\keywords{random knots, random trigonometric polynomials, Alexander polynomial, experimental mathematics}
\subjclass{57M25;42A61}
\address{St Andrews University School of Mathematics and Statistics and Temple University Mathematics Department.}
\email{igor.rivin@st-andrews.ac.uk}

\begin{abstract}
We study \emph{random knots}, which we define as a triple of random periodic functions (where a random function is a random trigonometric series, \[f(\theta) = \sum_{k=1}^\infty a_k \cos (k \theta) +b_k (\sin k \theta),\] with $a_k, b_k$ are independent gaussian random variables with mean $0$ and variance $\sigma(k)^2$ - our results will depend on the functional dependence of $\sigma$ on $k.$ In particular, we show that if $\sigma(k) = k^\alpha,$ with $\alpha < -3/2,$ then the probability of getting a knot type which admits a projection with $N$ crossings, decays at least as fast as $1/N.$ The constant $3/2$ is significant, because having $\alpha < -3/2$ is exactly the condition for $f(\theta)$ to be a $C^1$ function, so our class is precisely the class of random \emph{tame} knots.
\end{abstract}

\maketitle

\section{Introduction}

In this paper we study \emph{random curves in space and in the plane}. Our study was initially inspired by the subject of \emph{random knots.} The subject of random knots seems to be quite extensive, and many models of such knots have been suggested. The model in use in this paper, while not new (the author became aware of it in the early 1980s, when a version was suggested by Bill Thurston - indeed, in this paper we answer questions he posed back in the early 1980s), seems to be the least studied of all.

The model follows naturally from the following sequence of questions (and answers):
\begin{itemize}
\item{What is a knot?} A knot is a continuous map from $\mathbb{S}^1$ to $\mathbb{R}^3.$
\item{What is a continuous map from $\mathbb{S}^1$ to $\mathbb{R}^3?$} A map from $\mathbb{S}^1$ to $\mathbb{R}^3$ is a triple of continuous maps from $\mathbb{S}^1$ to $\mathbb{R}.$
\item{How do you represent a map from $\mathbb{S}^1$ to $\mathbb{R}?$} By a Fourier series: 
\[f(\theta)=\sum_{k=0}^\infty a_k \cos(k \theta) + b_k \sin(k \theta).\]
\item{What is a random such function?} The obvious way to produce a random periodic function is to let $a_k, b_k$ be random variables. The most obvious method: letting $a_k, b_k$ be independent identically distributed centered Gaussians does not work for our purposes (the functions thus obtained will be very wild). However, it is well-known that if $a_k, b_k$ decay at least as fast as $k^{3/2+\epsilon},$ then the resulting function will be of class $C^1.$\cite{kahane1993some}
\end{itemize}
Our main result is the following:
\begin{theorem}
\label{mainthm}
Suppose $a_k, b_k,c_k, d_k,\quad k=1, \dotsc, \infty$ are independent centered Gaussians with standard deviation of
$a_k, b_k$ equal to $k^{-3/2-\epsilon},$ for $\epsilon>0,$ and let \[x(\theta)=\sum_{k=1}^\infty a_k \cos(k \theta) + b_k \sin(k \theta),\] while \[y(\theta) = \sum_{k=1}^\infty c_k \cos(k \theta) + d_k \sin(k \theta).\] Then, the expected number of self-intersections of the plane curve $(x, y): \mathbb{S}^1\rightarrow \mathbb{R}^2$ is finite, and grows at most linearly in $1/\epsilon.$
\end{theorem}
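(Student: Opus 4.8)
The plan is to evaluate the expectation by the Kac--Rice (Rice) formula applied to the map $F\colon \mathbb{S}^1\times\mathbb{S}^1\to\mathbb{R}^2$ defined by $F(\theta_1,\theta_2)=(x(\theta_1)-x(\theta_2),\,y(\theta_1)-y(\theta_2))$. A self-intersection is precisely an off-diagonal zero of $F$, so I would count zeros of $F$ on $(\mathbb{S}^1\times\mathbb{S}^1)\setminus\Delta$, where $\Delta=\{\theta_1=\theta_2\}$ is the diagonal (which is automatically a zero set), and divide by $2$ since each unordered crossing appears as the two ordered pairs $(\theta_1,\theta_2)$ and $(\theta_2,\theta_1)$. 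Since $x$ and $y$ are independent stationary Gaussian processes with common covariance $r(\tau)=\sum_{k\ge 1}k^{-3-2\epsilon}\cos(k\tau)$, the Kac--Rice integrand depends only on $\tau=\theta_1-\theta_2$, and by stationarity the expected count reduces to $\pi\int_0^{2\pi} I(\tau)\,d\tau$ for a one-variable density $I(\tau)=\mathbb{E}\left[\,|\det DF|\ \middle|\ F=0\,\right]\,p_{F}(0)$.

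Next I would assemble the two factors. Writing $\rho(\tau)=r(0)-r(\tau)=\sum_k k^{-3-2\epsilon}(1-\cos k\tau)$, the vector $F(\theta_1,\theta_2)$ is a centered planar Gaussian with independent coordinates of variance $2\rho(\tau)$, so $p_F(0)=1/(4\pi\rho(\tau))$. For the Jacobian, $\det DF=x'(\theta_2)y'(\theta_1)-x'(\theta_1)y'(\theta_2)$, and conditioning on $F=0$ conditions $(x'(\theta_1),x'(\theta_2))$ on $x(\theta_1)=x(\theta_2)$ and, independently, the $y$-derivatives on $y(\theta_1)=y(\theta_2)$. Let $\Sigma$ be the resulting $2\times2$ conditional covariance of $(x'(\theta_1),x'(\theta_2))$ (the same matrix governs $y$). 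Conditioning on one factor, using $\mathbb{E}|N(0,V)|=\sqrt{2V/\pi}$ and then the Rayleigh identity $\mathbb{E}\sqrt{u^2+v^2}=\sqrt{\pi/2}$ for standard normals, collapses the conditional Jacobian expectation to the clean form $\mathbb{E}[|\det DF|\mid F=0]=\sqrt{\det\Sigma}$. A Schur-complement identity gives $\det\Sigma=\det C_3/(2\rho)$, where $C_3$ is the covariance of $(x(\theta_1)-x(\theta_2),x'(\theta_1),x'(\theta_2))$, leading to the explicit expression $\det\Sigma=(\rho''(0)-\rho''(\tau))(\rho''(0)+\rho''(\tau)-\rho'(\tau)^2/\rho(\tau))$. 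Thus $I(\tau)=\sqrt{\det\Sigma}\,/\,(4\pi\rho(\tau))$ is expressed entirely through $\rho$ and its first two derivatives.

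The source of the $1/\epsilon$ growth is then transparent: the derivative variance is $\rho''(0)=\sum_k k^{-1-2\epsilon}=\zeta(1+2\epsilon)\sim 1/(2\epsilon)$ as $\epsilon\to 0$. For $\tau$ bounded away from $0$ and $2\pi$ one has $\rho(\tau),\rho'(\tau),\rho''(\tau)=O(1)$ uniformly in $\epsilon$, so both factors of $\det\Sigma$ are of size $\rho''(0)$ and $I(\tau)\sim \rho''(0)/(4\pi\rho(\tau))$; integrating over this bulk region contributes $\Theta(\zeta(1+2\epsilon))=\Theta(1/\epsilon)$, which is exactly the claimed at-most-linear growth coming from the body of the integral.

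The main obstacle is the behavior as $\tau\to 0$, and this is precisely where the $C^1$ threshold bites. Because $x\in C^1\setminus C^2$, the covariance $\rho$ is $C^2$ but not $C^4$, so I would need the sharp small-$\tau$ expansion of $r=\mathrm{Re}\,\mathrm{Li}_{3+2\epsilon}(e^{i\tau})$: the analytic part gives $\rho(\tau)=\tfrac12\rho''(0)\tau^2+\cdots$, while the nonanalytic polylogarithm term contributes a correction of order $\tau^{2+2\epsilon}$. The purely parabolic (analytic) contributions cancel to leading order in $\det\Sigma$ — indeed for a model covariance $\rho=c\tau^2$ one gets $\det\Sigma\equiv 0$ — so the surviving size of $\det\Sigma$ is governed entirely by the nonanalytic correction; a careful expansion shows both factors of $\det\Sigma$ are of order $\tau^{2\epsilon}$, whence $\det\Sigma\sim\tau^{4\epsilon}$ and $I(\tau)\sim c(\epsilon)\,\tau^{2\epsilon-2}$ near the diagonal. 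Establishing this cancellation rigorously, tracking the constant $c(\epsilon)$ through the $\Gamma$- and $\zeta$-factors of the polylog expansion, and showing that this boundary-layer contribution is dominated by (rather than dominating) the bulk $\Theta(1/\epsilon)$ estimate is the delicate heart of the argument; I expect it to be the step demanding the most care and the one that determines whether the total is genuinely controlled uniformly by $1/\epsilon$.
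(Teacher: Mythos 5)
Your framework is the same as the paper's: the paper invokes the Edelman--Kostlan theorem for $Av(t)=0$, which for $m=2$ reduces exactly to the Kac--Rice density $\mathbb{E}[|\det DF|\mid F=0]\,p_F(0)$ that you write down, and your intermediate formulas are all correct ($p_F(0)=1/(4\pi\rho)$, the collapse $\mathbb{E}[|\det DF|\mid F=0]=\sqrt{\det\Sigma}$, and $\det\Sigma=(\rho''(0)-\rho''(\tau))\bigl(\rho''(0)+\rho''(\tau)-\rho'(\tau)^2/\rho(\tau)\bigr)$). The genuine gap is at the step you yourself flag as the ``delicate heart,'' and it is not merely delicate --- it fails. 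Your own small-$\tau$ asymptotic gives $I(\tau)\sim c(\epsilon)\,\tau^{2\epsilon-2}$, and for $0<\epsilon<1/2$ the exponent $2\epsilon-2$ is less than $-1$, so $\int_0 I(\tau)\,d\tau=+\infty$: the boundary layer is not dominated by the bulk, it diverges. Since Kac--Rice is an identity on each $\{|\theta_1-\theta_2|>\eta\}$ (the field is Gaussian, a.s.\ $C^1$, with nondegenerate marginals off the diagonal), monotone convergence in $\eta$ then forces the expected number of self-intersections to be infinite for $0<\epsilon<1/2$, contradicting the statement rather than proving it. You can check the borderline case $\epsilon=1/2$ (i.e.\ $\sigma_k=k^{-2}$) in closed form: there $\rho(\tau)=\tau^2(2\pi-\tau)^2/48$, the two factors of $\det\Sigma$ are $\tau(2\pi-\tau)/4$ and $\tau(2\pi-\tau)/12$, so $\det\Sigma=\rho$ and
\begin{equation*}
I(\tau)=\frac{\sqrt{\det\Sigma}}{4\pi\rho(\tau)}=\frac{\sqrt{3}}{\pi\,\tau(2\pi-\tau)},
\end{equation*}
which is already logarithmically divergent at both endpoints. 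So the plan as written cannot be completed for small $\epsilon$; you must either exhibit a cancellation that kills the $\tau^{2\epsilon}$ terms in \emph{both} factors of $\det\Sigma$ beyond what the polylog expansion shows (I see none), or restrict to $\epsilon>1/2$, or abandon first-moment methods entirely.

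For comparison, the paper's own argument does not rescue you here. It regularizes by dividing $v$ by $g(\vartheta-\varphi)$, but, as the paper itself observes, the mixed partials of $\log g$ across the two groups of variables vanish, so the Edelman--Kostlan integrand off the diagonal is unchanged and the same near-diagonal divergence is present; the paper only verifies that the \emph{series} entering the integrand converge pointwise when $\alpha>3/2$ and never tests integrability of the resulting density as $\tau\to0$. (Its worked example for $\alpha=2$ is also internally inconsistent: the displayed quadratic for $v(x,y)Cv(z,w)$ evaluates at $z=x$, $w=y$ to $-((x-y)^2+4\pi^2)/24$, a negative variance, whereas the correct value is $(2\pi-(x-y))^2/24$.) So your proposal reproduces the paper's route with more care, and precisely that extra care exposes that the route does not establish finiteness of the expectation in the claimed range.
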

Theorem \ref{mainthm} has a number of easy corollaries.
\begin{corollary}
\label{markov}
If $\gamma(\theta) = x(\theta), y(\theta),$ as in the statement of Theorem \ref{mainthm} is a random smooth plane curve, then the probability that the number of self-intersections of $\gamma$ exceeds $N$ decays at least linearly in $N.$
\end{corollary}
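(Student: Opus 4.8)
The plan is to deduce the corollary directly from Theorem~\ref{mainthm} via Markov's inequality — indeed the label of the statement already hints at this. Let $X = X(\gamma)$ denote the number of self-intersections of the random plane curve $\gamma = (x,y)$. This is a nonnegative, integer-valued random variable, and the content of Theorem~\ref{mainthm} is precisely that its expectation $\mu := \mathbb{E}[X]$ is finite (and, in fact, grows at most linearly in $1/\epsilon$). The first step I would take is to record these two facts — nonnegativity and finiteness of the expectation — as the only hypotheses that Markov's inequality requires.

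Second, I would apply Markov's inequality to $X$: for every $N > 0$,
\[
\mathbb{P}(X > N) \le \mathbb{P}(X \ge N) \le \frac{\mathbb{E}[X]}{N} = \frac{\mu}{N}.
\]
Since $\mu$ is a finite constant for fixed $\epsilon$, the right-hand side is $O(1/N)$; that is, the tail probability decays at least linearly in $N$, which is exactly the asserted conclusion.

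There is essentially no obstacle internal to this corollary: all of the analytic difficulty has been absorbed into Theorem~\ref{mainthm}, whose finiteness-of-expectation conclusion does the work. The only point worth a sentence of care is that $X$ is genuinely measurable and almost surely finite, so that $\mathbb{E}[X]$ is a well-defined finite number and Markov's inequality is legitimately applicable; but this is already implicit in the statement that the expected number of self-intersections is finite. Finally, if one wanted the sharper linear-in-$1/\epsilon$ dependence to be visible in the tail bound itself, one would simply carry the explicit constant from Theorem~\ref{mainthm} through the inequality above, yielding a bound of the form $\mathbb{P}(X > N) \le C\epsilon^{-1}/N$ for an absolute constant $C$.
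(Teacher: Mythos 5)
Your proof is correct and is exactly the paper's argument: the paper also deduces the corollary by applying Markov's inequality to the self-intersection count, whose expectation is finite by Theorem~\ref{mainthm}. You have merely spelled out the details that the paper leaves implicit.
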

\begin{proof}
This is an immediate consequence of Theorem \ref{mainthm} combined with Markov's Inequality.
\end{proof}
\begin{corollary}
\label{decaycor}
Let $\gamma(\theta) = (x(\theta), y(\theta), z(\theta))$ be a random knot, where $x(\theta), y(\theta)$ are as in the statement of Theorem \ref{mainthm}, 
while 
\[z(\theta) = \sum_{k=1}^\infty e_k \cos(k \theta) + f_k \sin(k \theta),\] where $e_k, f_k$ are independent (also independent of all of the $a, b, c, d$ centered Gaussians with standard deviation of $e_k, f_k$ equal to
$k^{-3/2-\epsilon}$ (same as $a_k, b_k, c_k, d_k$) Then the probability that the crossing number of $\gamma$ is greater than $N$ decays at least linearly in $N.$
\end{corollary}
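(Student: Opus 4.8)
The plan is to compare the crossing number of the spatial knot $\gamma$ with the number of self-intersections of its planar shadow, and then invoke Corollary \ref{markov}. The central observation is that the orthogonal projection of $\gamma$ onto the $xy$-plane is exactly the random plane curve $(x(\theta), y(\theta))$ studied in Theorem \ref{mainthm}, and that, almost surely, this projection is a legitimate knot diagram for $\gamma$. Granting this, every transverse double point of $(x, y)$ becomes a crossing of the diagram (with over/under strand recorded by the sign of $z(\theta_1) - z(\theta_2)$), so the number of crossings in this particular diagram equals the number $S$ of self-intersections of the plane curve. Since the crossing number $c(\gamma)$ of a knot type is by definition the minimum number of crossings taken over all of its diagrams, we obtain the almost-sure pointwise bound $c(\gamma) \le S$.

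First I would record that $\gamma$ is almost surely a genuine (tame) knot. Because the standard deviations decay like $k^{-3/2-\epsilon}$ with $\epsilon > 0$, each coordinate is almost surely a regular $C^1$ curve by the criterion cited in the introduction. Injectivity is the only remaining point: the map $\gamma$ fails to be injective precisely when some self-intersection $\theta_1 \ne \theta_2$ of $(x, y)$ also satisfies $z(\theta_1) = z(\theta_2)$. For a fixed pair $(\theta_1, \theta_2)$ with $\theta_1 \ne \theta_2$, the quantity $z(\theta_1) - z(\theta_2)$ is a nondegenerate centered Gaussian, independent of $x$ and $y$, hence vanishes with probability zero; since Theorem \ref{mainthm} guarantees that the self-intersection set is almost surely finite, a union bound over this finite set shows $z(\theta_1) \ne z(\theta_2)$ at every self-intersection almost surely. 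Thus $\gamma$ is almost surely an embedded circle, and its crossing number is well defined.

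Next I would verify that the shadow is a \emph{regular} diagram, i.e.\ that almost surely all self-intersections of $(x, y)$ are transverse double points, with no tangencies, triple points, or cusps. This is a standard general-position statement for smooth Gaussian curves: each exceptional configuration is cut out by additional independent Gaussian constraints (a triple point forces the simultaneous vanishing of several coordinate differences, a tangency forces a determinant of velocity vectors to vanish), and each such event holds only on a set of measure zero; the almost-sure finiteness of the self-intersection count from Theorem \ref{mainthm} again lets one pass from these pointwise statements to an almost-sure conclusion. With this in hand, the crossings of the diagram are in bijection with the self-intersections of the plane curve, so $c(\gamma) \le S$ almost surely.

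The conclusion is then immediate. By monotonicity of probability and the almost-sure bound $c(\gamma) \le S$,
\[
\Pr\bigl[c(\gamma) > N\bigr] \;\le\; \Pr\bigl[S > N\bigr],
\]
and Corollary \ref{markov} bounds the right-hand side by $C/N$, where $C$ is the expected number of self-intersections supplied by Theorem \ref{mainthm}. Hence $\Pr[c(\gamma) > N]$ decays at least linearly in $N$, as claimed. The only genuinely technical step — and the one I expect to be the main obstacle — is the general-position argument above: making rigorous that the random smooth curve $(x, y)$ almost surely has only transverse double-point self-intersections and that $z$ separates the two strands at each of them. Everything else is a one-line consequence of the results already established.
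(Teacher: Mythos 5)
Your argument is the same as the paper's, which simply takes the first two coordinates of $\gamma$ and applies Corollary \ref{markov}; you have additionally spelled out the general-position and embeddedness details that the paper leaves implicit. The proof is correct and the approach is essentially identical.
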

\begin{proof}
We take the first two coordinates of $\gamma$ and apply Corollary \ref{markov}.
\end{proof}
\subsection{Slowly decaying coefficients} A philosophically different model consists of taking \emph{random trigonometric polynomials,} as opposed to series (or, if the reader prefers, truncating the series after $N$ terms. The methods used to prove Theorem \ref{mainthm} immediately say that the expected number of crossings of a knot given by such a polynomial is \emph{quadratic} in $N,$ as long as the standard deviations of the coefficients decay slower than $k^{3/2}.$
\section{Proof of Theorem \ref{mainthm}}
Our first ingredient is \cite[Theorem 7.1]{edelman1995many}:
\begin{theorem}
\label{edelmank}
Let $v(t) = (f_0(t), \dotsc, f_n(t))^T$ be a differential function from $\mathbb{R}^m$ to $\mathbb{R}^{n+1},$ let $U$ be a measurable subset of $\mathbb{R}^m,$ and let $A$ be a random $m\times (n+1)$ matrix. Assume that the rows of $A$ are iid multivariate normal vectors, with mean zero and covariance matrix $C.$ The expected number of real roots of the system of equations $Av(t) = 0$ equals
\begin{equation}
\label{ekeq}
\pi^{-(m+1)/2}\Gamma\left(\frac{m+1}2\right) \int_U \left(\det\left[\frac{\partial^2}{\partial x_i \partial y_j}(\log v(x)^T C v(y))\left|_{x=y=t}\right.\right]\right)^{1/2}dt.
\end{equation}
\end{theorem}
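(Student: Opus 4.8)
The plan is to recognize this as an instance of the Kac--Rice formula, specialized to the rigid Gaussian structure coming from a \emph{random linear combination} of the fixed functions $f_0,\dots,f_n$. Writing $A_1,\dots,A_m$ for the rows of $A$, the $i$-th coordinate of $Av(t)$ is the scalar field $g_i(t)=A_i v(t)$, so the roots of $Av(t)=0$ in $U$ are exactly the common zeros of the fields $g_1,\dots,g_m\colon U\to\mathbb{R}$. Because the rows $A_i$ are iid $N(0,C)$, the $g_i$ are independent, identically distributed, mean-zero Gaussian fields, each with covariance kernel $K(s,t)=\mathbb{E}[g_i(s)g_i(t)]=v(s)^T C\,v(t)$. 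First I would record the two consequences of this that drive everything. Pointwise $g_i(t)\sim N(0,\sigma^2(t))$ with $\sigma^2(t)=v(t)^T C\,v(t)$, so the vector $g(t)=(g_1,\dots,g_m)(t)$ has density $(2\pi\sigma^2(t))^{-m/2}$ at the origin; and the rows of the Jacobian $Dg(t)$, whose $i$-th row is $A_i\,Dv(t)$, are independent across $i$.

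Next I would invoke the Kac--Rice formula for the vector field $g\colon\mathbb{R}^m\to\mathbb{R}^m$, which expresses the expected number of zeros in $U$ as $\int_U \mathbb{E}\!\left[\,|\det Dg(t)|\mid g(t)=0\,\right]\,p_{g(t)}(0)\,dt$. The heart of the argument is then the conditional law of $Dg(t)$ given $g(t)=0$. By independence of the rows this reduces to a single row: conditioning the Gaussian pair $(g_i(t),\nabla g_i(t))$ on $g_i(t)=0$ and applying the standard Gaussian regression (Schur complement) gives a centered Gaussian row with covariance $\Sigma_{jl}(t)=(\partial_j v)^T C(\partial_l v)-\sigma^{-2}\big[(\partial_j v)^T C v\big]\big[v^T C(\partial_l v)\big]$. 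A direct differentiation shows this is exactly $\sigma^2(t)$ times the matrix $M(t)=\big[\partial_{x_i}\partial_{y_j}\log(v(x)^T C\,v(y))\big|_{x=y=t}\big]$ appearing in \eqref{ekeq}; this identification of the Hessian of the log-kernel with the conditional covariance is the computational crux, and I would carry it out carefully, noting that it also explains why the logarithm, rather than the kernel itself, is what appears.

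With the conditional law in hand, the remaining steps are algebraic. Conditionally, $Dg(t)$ is an $m\times m$ matrix with iid rows distributed as $N(0,\Sigma)$ with $\Sigma=\sigma^2 M$, so it factors as $G\,\Sigma^{1/2}$ for a standard Gaussian matrix $G$, whence $\mathbb{E}[|\det Dg(t)|\mid g(t)=0]=(\det\Sigma)^{1/2}\,\mathbb{E}[|\det G|]=\sigma^m(\det M)^{1/2}\,\mathbb{E}[|\det G|]$. I would evaluate $\mathbb{E}[|\det G|]$ by the classical base-times-height decomposition: $|\det G|=\prod_{k=1}^m d_k$, where $d_k$ is the distance from the $k$-th row to the span of the earlier rows and, by rotational invariance, $d_k\sim\chi_{m-k+1}$ independently; the product of the means telescopes to $2^{m/2}\pi^{-1/2}\Gamma\!\big(\tfrac{m+1}{2}\big)$. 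Substituting into the Kac--Rice integral, the factor $\sigma^m$ from the Jacobian cancels the $\sigma^{-m}$ in the density $(2\pi)^{-m/2}\sigma^{-m}$, and collecting constants yields precisely the prefactor $\pi^{-(m+1)/2}\Gamma(\tfrac{m+1}{2})$ multiplying $\int_U(\det M)^{1/2}\,dt$, which is \eqref{ekeq}.

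The step I expect to be the main obstacle is not any single computation but the rigorous justification of the Kac--Rice formula in this generality: one must verify that $g$ is almost surely $C^1$, that $g(t)$ is non-degenerate (so that $\sigma^2(t)>0$ and $M(t)$ is well defined, which requires $Cv(t)\neq 0$ on $U$ and forces a nondegeneracy hypothesis on the pair $(v,C)$), and that with probability one the zeros of $g$ are isolated with nonsingular Jacobian, so that the integral-geometric counting identity is valid and the conditional expectation is finite. I would address these by the standard route, using a Bulinskaya-type argument for isolatedness and an approximation together with a dominated-convergence passage from smoothed indicators, before performing the exact Gaussian computations above.
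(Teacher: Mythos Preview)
The paper does not prove this statement: it is quoted verbatim as Theorem~7.1 of Edelman--Kostlan and used as a black-box ingredient in the proof of Theorem~\ref{mainthm}. So there is no ``paper's own proof'' to compare against.

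That said, your proposal is a correct and self-contained derivation. Your route is the direct probabilistic one: apply the Kac--Rice formula to the vector field $g=(A_1v,\dots,A_mv)$, identify the conditional covariance of a row of $Dg$ given $g=0$ with $\sigma^2 M$ via the Schur complement, factor the conditioned Jacobian as $G\Sigma^{1/2}$, and evaluate $\mathbb{E}|\det G|$ by the chi telescoping product. The constants assemble to $\pi^{-(m+1)/2}\Gamma((m+1)/2)$ exactly as claimed; the key identification of $\partial_{x_i}\partial_{y_j}\log K$ with the normalized conditional covariance is the right way to see why the logarithm appears. Edelman and Kostlan's original argument is genuinely different in spirit: they interpret $Av(t)=0$ as the intersection of the projective curve $t\mapsto[v(t)]$ (in the $C$-induced metric on $\mathbb{RP}^n$) with a uniformly random codimension-$m$ subspace, and obtain the integrand as the pulled-back volume form via a Crofton/kinematic formula, so the constant emerges from volumes of projective spaces rather than from $\mathbb{E}|\det G|$. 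Your approach is more elementary and stays entirely within Gaussian calculus; theirs explains the formula geometrically and makes the invariance under reparametrization of $t$ and under $GL$-changes of the basis $f_0,\dots,f_n$ transparent. Your caveat about the regularity hypotheses needed for Kac--Rice (nondegeneracy $v(t)^TCv(t)>0$ on $U$, a.s.\ nondegenerate zeros) is well placed; these are implicit in the cited source as well.
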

Now, in Theorem \ref{mainthm} we are looking for the self-intersections of the curve $\gamma(\theta) = (x(\theta), y(\theta)).$ This means that we are looking for pairs $s, t$ such that $x(t) = x(s), y(t)=y(s),$ so we are looking for zeros of the vector function $(x(t)-x(s), y(t)-y(s).$ However, we want to eliminate the trivial zeros (where $t=s$), which can be achieved by looking for zeros of $(x(t)-x(s), y(t)-y(s))/g(t-s),$ where $g$ is a function vanishing to first order at zero, and nowhere else in $(0, 2\pi)$ (we will see that it is somewhat useful to have the flexibility of choosing $g.$

Now, we will apply Theorem \ref{edelmank} with \begin{gather*}f_{2k+1}(\vartheta, \varphi) = \frac{(\cos(k \vartheta)-\cos(k \varphi))}{g(\vartheta-\varphi)},\\ f_{2k}(\vartheta, \varphi) = \frac{(\sin(k \vartheta)-\sin(k \varphi))}{g(\vartheta -\varphi)},\end{gather*} while  \[A = \begin{pmatrix}a_1, b_1, \dotsc, a_k, b_k, \dotsc\\c_1, d_1, \dotsc, c_k, d_k, \dotsc\end{pmatrix}.\]

If $a_k, b_k, c_k, d_k$ are independent normal with standard deviation $\sigma_k = k^{-\alpha},$ the covariance matrix $C$ is the diagonal matrix, with diagonal entries $k^{-2\alpha}.$ Finally, simple trigonometry tells us that\begin{multline*}
v(x, y) C v(z, w) = \\\sum_{k=1}^\infty \frac{\cos(k(x-z)) + \cos(k(y-w)) - \cos(k(y-z)) - \cos(k(x-w))}{g(x-y)g(z-w)k^{2\alpha}}.
\end{multline*}
In the case where $\alpha$ is an integer, the above expression can be evaluated in closed form.
\begin{example}
Suppose $\alpha = 2.$ Then, it can be shown that:
\[
\sum_{k=1}^\infty \frac{\cos(k x)}{k^4} = -\frac{x^4}{48}+\frac{\pi  x^3}{12}-\frac{\pi ^2 x^2}{12}+\frac{\pi ^4}{90}.
\]
Using this, and $g(x) = x,$ one can write down the correlation in the case where $\alpha = 2$ as 
\begin{multline*}
v(x, y) C v(z, w) = \\
\frac{1}{24} (-2 w^2+3 w x+3 w y-2 w z-6 \pi  w-2 x^2-2 x y+3 x z+\\
6 \pi  x-2 y^2+3 y
   z+6 \pi  y-2 z^2-6 \pi  z-4 \pi ^2)
\end{multline*}
Using this, one can compute the expectation in closed form as:
\[
-4 \sqrt{3} \Gamma\left(3/2\right) \pi^{-3/2} \left(\text{csch}^{-1}(1)-1\right) \approx 0.130804
\]
\end{example}
In the general case, we set $g(x) = \sin(x/2).$ 
This gives us:
\begin{multline*}
\log(v(x, y) C v(z, w)) = \\\log \sum_{k=1}^\infty \frac{\cos(k(x-z)) + \cos(k(y-w)) - \cos(k(y-z)) - \cos(k(x-w))}{k^{2\alpha}} \\- \log(\sin((x-y)/2) \sin((z-w)/2)) = \\ f_1(x, y, z, w) + f_2(x, y, z, w),
\end{multline*}
where $f_1$ and $f_2$ denote the two log terms in the formula.
We have 
\[
\frac{\partial^2 f_2}{\partial x\partial z} = \frac{\partial^2 f_2}{\partial x\partial w} = \frac{\partial^2 f_2}{\partial y\partial z} = \frac{\partial^2 f_2}{\partial y\partial w} = 0,
\]
The other derivatives are a little more tedious to calculate:
\begin{align*}
\frac{\partial f_1}{\partial x} &= \frac{\sum_{k=1}^\infty \frac{-k\sin(k(x-z)) +  k\sin (k(x-w))}{k^{2\alpha}}}{\sum_{k=1}^\infty \frac{\cos(k(x-z)) + \cos(k(y-w)) - \cos(k(y-z)) - \cos(k(x-w))}{k^{2\alpha}}}\\
\frac{\partial f_1}{\partial z} &= \frac{\sum_{k=1}^\infty \frac{k\sin(k(x-z)) -  k\sin (k(y-z))}{k^{2\alpha}}}{\sum_{k=1}^\infty \frac{\cos(k(x-z)) + \cos(k(y-w)) - \cos(k(y-z)) - \cos(k(x-w))}{k^{2\alpha}}}.
\end{align*}
It is clear that the dominant terms in all the second partials (as $\alpha$ approaches $3/2$) are of the form
\[
\frac{\sum_{k=1}^\infty \frac{-k^2\cos(k(x-z))}{k^{2\alpha}}}{\sum_{k=1}^\infty \frac{\cos(k(x-z)) + \cos(k(y-w)) - \cos(k(y-z)) - \cos(k(x-w))}{k^{2\alpha}}},
\]
and these converge precisely when $\alpha > 3/2.$

 \section{The truth, and experiments} It is almost certain that Corollary \ref{decaycor} is nowhere near sharp, and the probability described therein decays \emph{exponentially} in $N.$ For space curves, we have conducted experiments with different decay rates, and estimated the knot types by computing the Alexander polynomial. When coefficients decay at the threshold decay rate of $k^{3/2},$ the results are that in $100$ experiments, we get $97$ unknots and $3$ trefoils. At the decay rate of $k^2,$ there are only unknots.
 At linear decay rate, around half the knots ($51$ out a hundred) are knots, $23$ are trefoils, $4$ are $5_2$ knots ("three twist knot"), $2$ $8_{20}$ knots and a smattering of others. For $\alpha=5/4,$ there are $89$ unknots, $8$ trefoils, and one each of figure $8,$ the $5_2$ knot, and one septafoil ($7_1$) knot.
 \subsection{Distribution of zeros of random Alexander polynomials}
 We generated 600 random knots of degree $100$ with centered normal coefficients decaying linearly, and we plotted their set of zeros (so, no multiplicity information is present). The highest degree of Alexander polynomial was equal to $12.$ The data is summarized in Figure \ref{alexfig}.
 \begin{figure}
\centering
\includegraphics[width=0.5\textwidth]{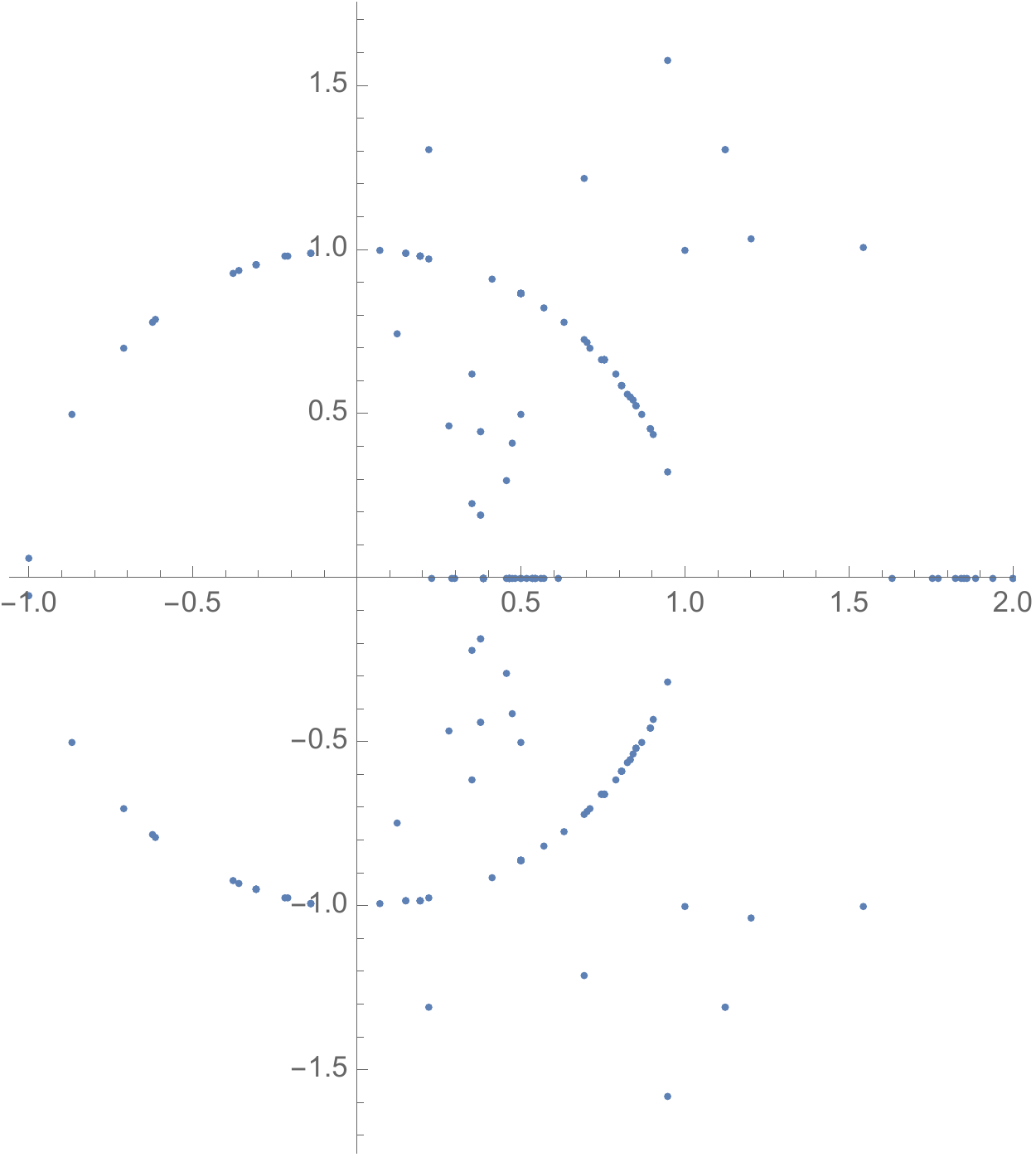}
\caption{\label{alexfig}Distribution of roots of Alexander polynomials of random knots.}
\end{figure}
You will notice that the distribution of the roots is quite asymmetric around the imaginary axis, quite unlike the distribution of zeros of random reciprocal polynomials (of degree 12, with coefficients uniform in $[-20, 20]$ - see Figure \ref{recipfig}. Note that \emph{all} the real roots are positive (in fact, the smallest one is bigger than about $0.22$).
\begin{figure}
\centering
\includegraphics[width=0.5\textwidth]{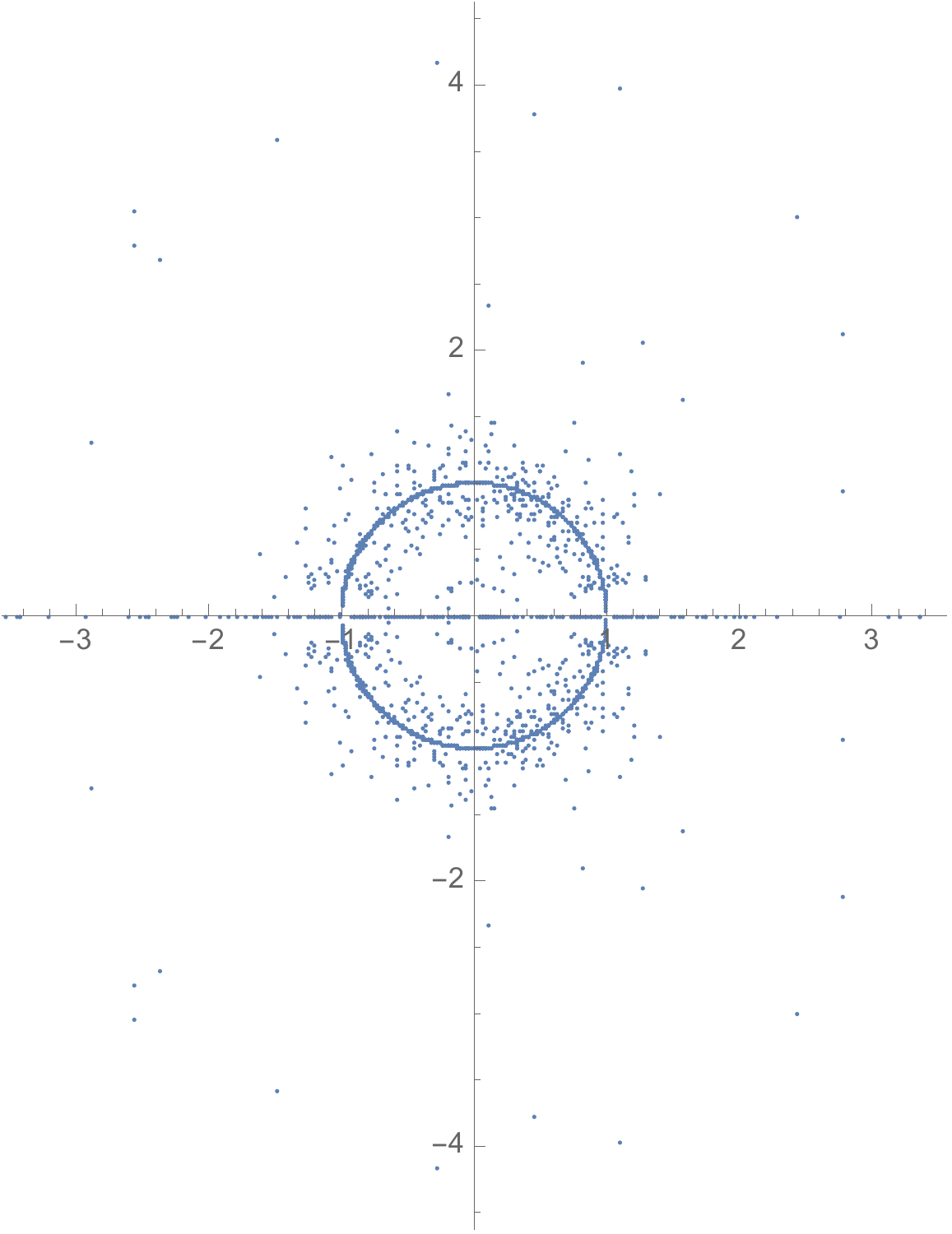}
\caption{\label{recipfig}Distribution of roots of random reciprocal polynomials.}
\end{figure}
Then, we generated $600$ roots of random polynomials of degree $30$ with iid centered normal coefficients. The root distribution now looks quite different (see Figure \ref{alexfig2}):
\begin{figure}
\centering
\includegraphics[width=0.5\textwidth]{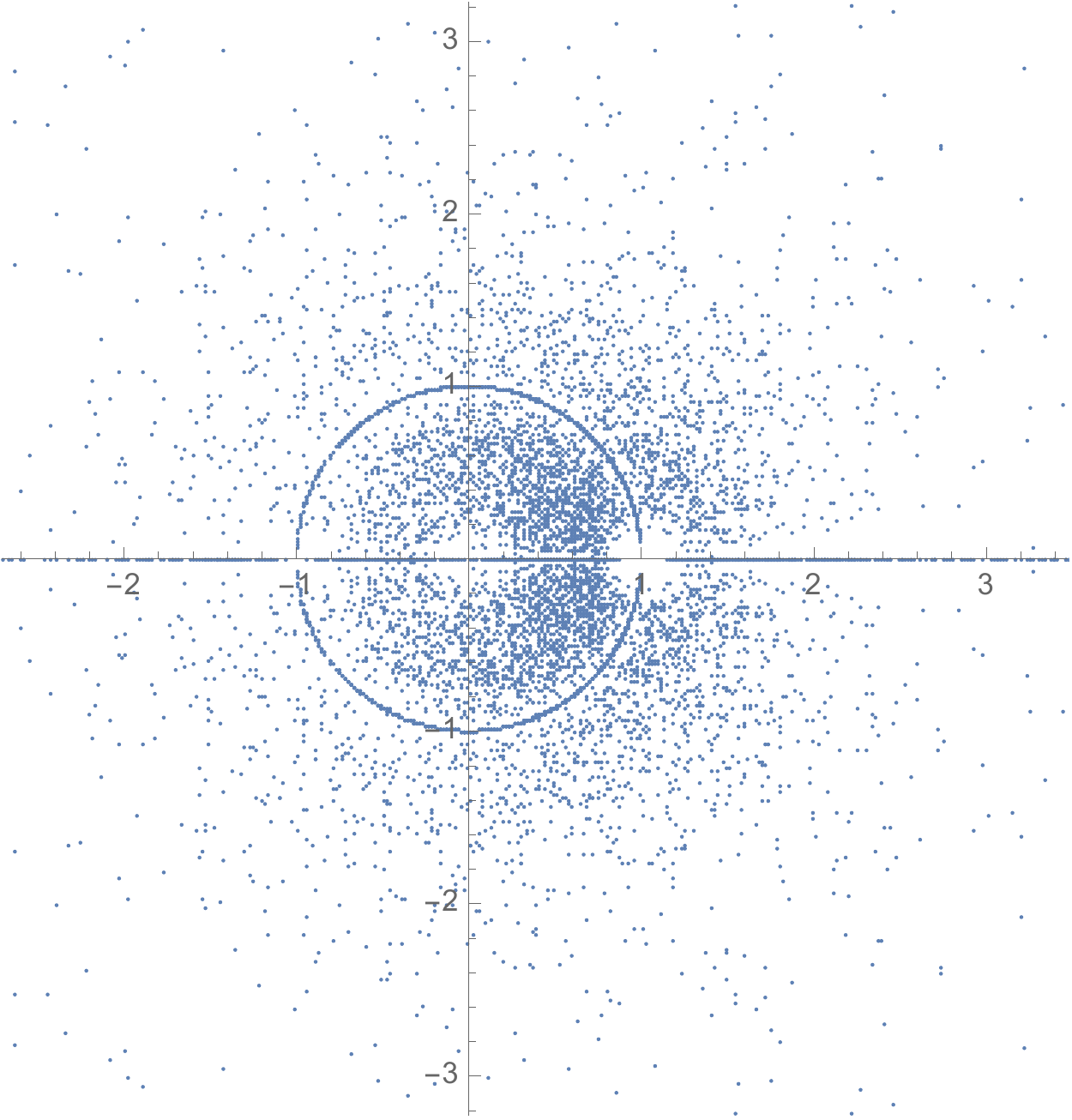}
\caption{\label{alexfig2}Distribution of roots of Alexander polynomials of random knots - no decay, degree $30.$}
\end{figure}
Now, for the strangest results of all, we look at Alexander polynomials of knots with non-decaying coefficients, and degree $60$ - see Figure \ref{alexfig3}, we used only $100$ knots here:
\begin{figure}
\centering
\includegraphics[width=0.5\textwidth]{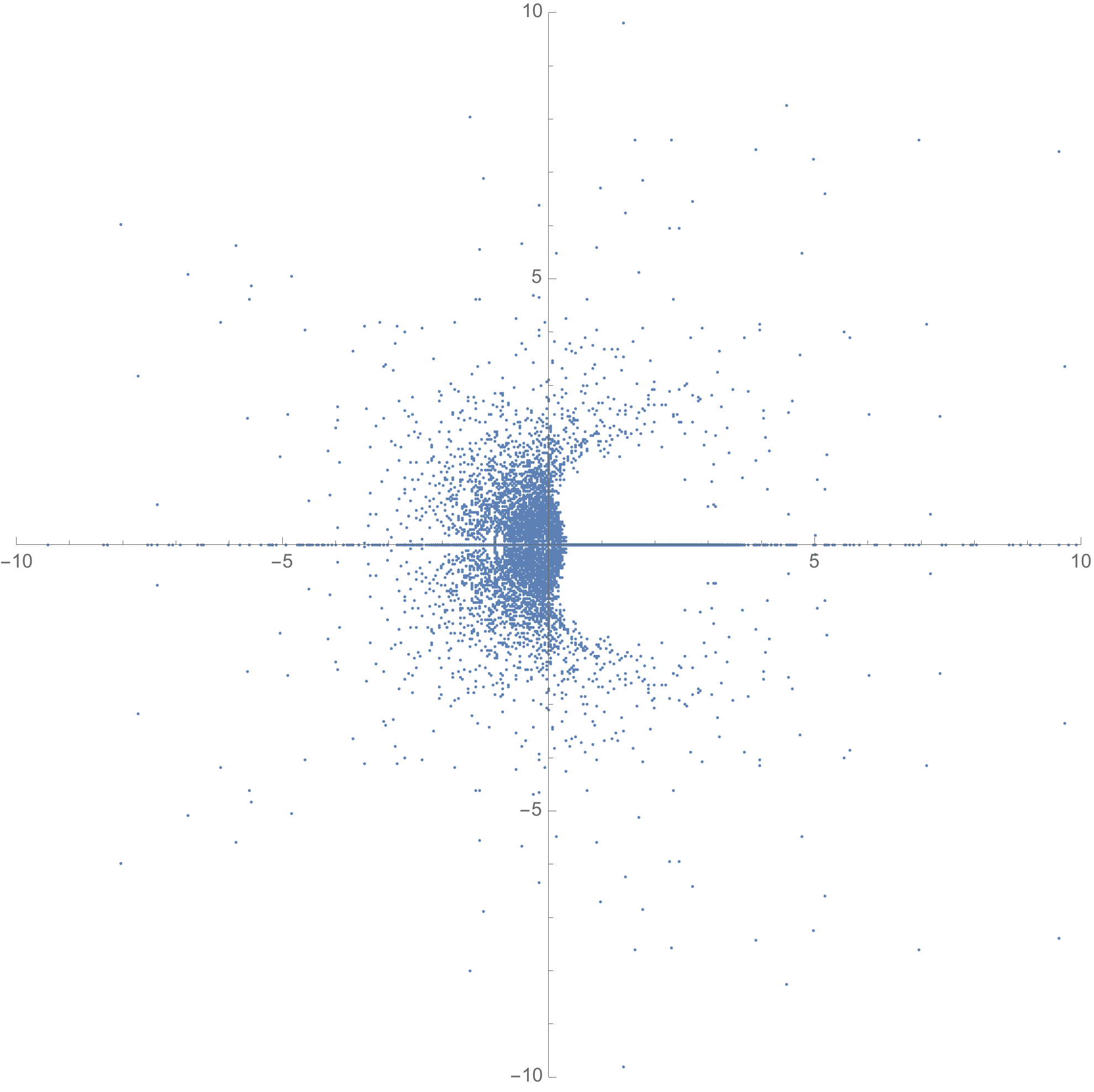}
\caption{\label{alexfig3}Distribution of roots of Alexander polynomials of random knots - no decay, degree $60.$}
\end{figure}
It looks like the hole around the point $1$ is widening and threatening to swallow the world. Interestingly, the vast majority ($99\%$) of Alexander polynomials in all cases have the sum of roots positive. On the other hand, while a random reciprocal polynomial has most of its roots on the unit circle, this is definitely \emph{not} true for Alexander polynomials in any of the regimes we tried.
\subsection{Coefficients of Alexander polynomials}
We see by looking at Figure \ref{alexfig3} (for example) that there are no zeros of Alexander polynomials of random knots around the point $z=1,$ from which, if the coefficients were positive, we would know (by the work of \cite{lebowitz2016central}) that the coefficients would be normally distributed.\footnote{The author would like to thank Robin Pemantle for bringing this work to his attention} Our Alexander polynomials are not blessed with positive coefficients, but below are the graphs (Figures \ref{alexroots60}, \ref{alexroots70}, \ref{alexroots80}, \ref{alexroots90}) of the logarithm of the absolute value of the coefficients as a function of the degree of the monomial. Each graph is for \emph{single random knot} - no averaging has been perfomed. It should also be noted that any reciprocal polynomial which evaluates to $1$ at $1$ is the Alexander polynomial of some knot (see \cite{kawauchi2012survey}), so there is definitely a concentration of measure phenomenon going on. The graphs appear to somewhat-smaller-than-semi ellipses. When I discussed this behaviour with Stephen Wolfram, he pointed out that very similar looking graphs appear in his book \emph{A New Kind of Science} \cite{wolfram2002new}, see Figure \ref{wolfig}
\begin{figure}
\centering
\includegraphics[width=0.5\textwidth]{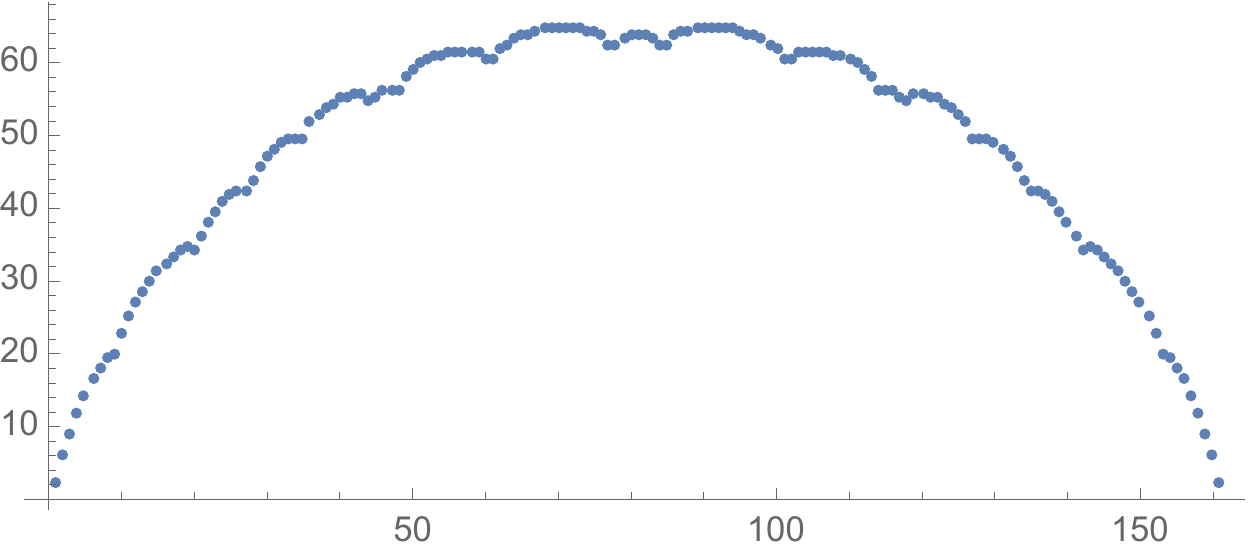}
\caption{\label{alexroots60}Distribution of logs of absolute values of the coefficients of a random Alexander polynomial of  degree $60,$ no decay}
\end{figure}
\begin{figure}
\centering
\includegraphics[width=0.5\textwidth]{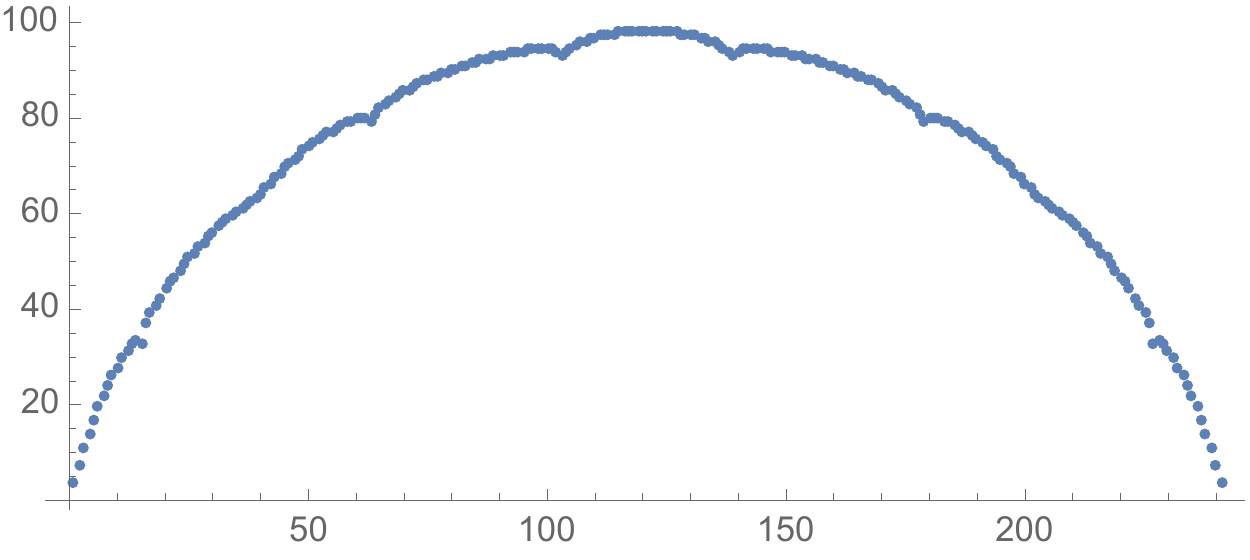}
\caption{\label{alexroots70}Distribution of logs of the absolute values of the coefficients of a random Alexander polynomial of  degree $70,$ no decay}
\end{figure}
\begin{figure}
\centering
\includegraphics[width=0.5\textwidth]{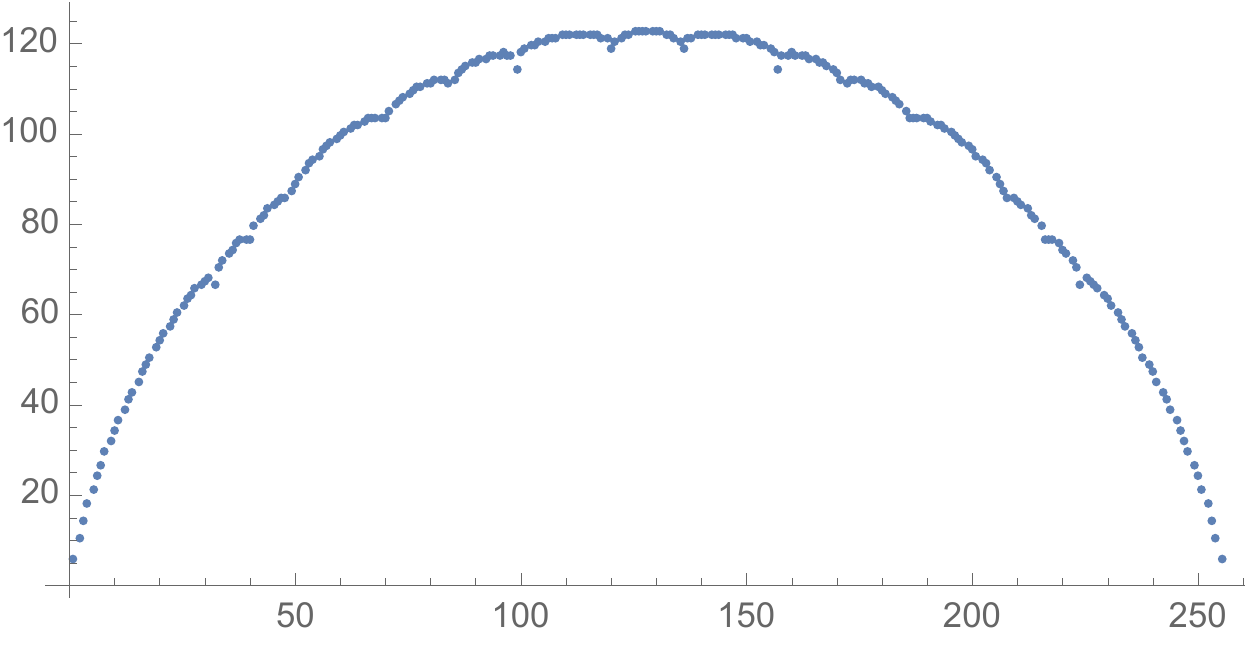}
\caption{\label{alexroots80}Distribution of logs of the absolute values of the coefficients of a random Alexander polynomial of  degree $80,$ no decay}
\end{figure}
\begin{figure}
\centering
\includegraphics[width=0.5\textwidth]{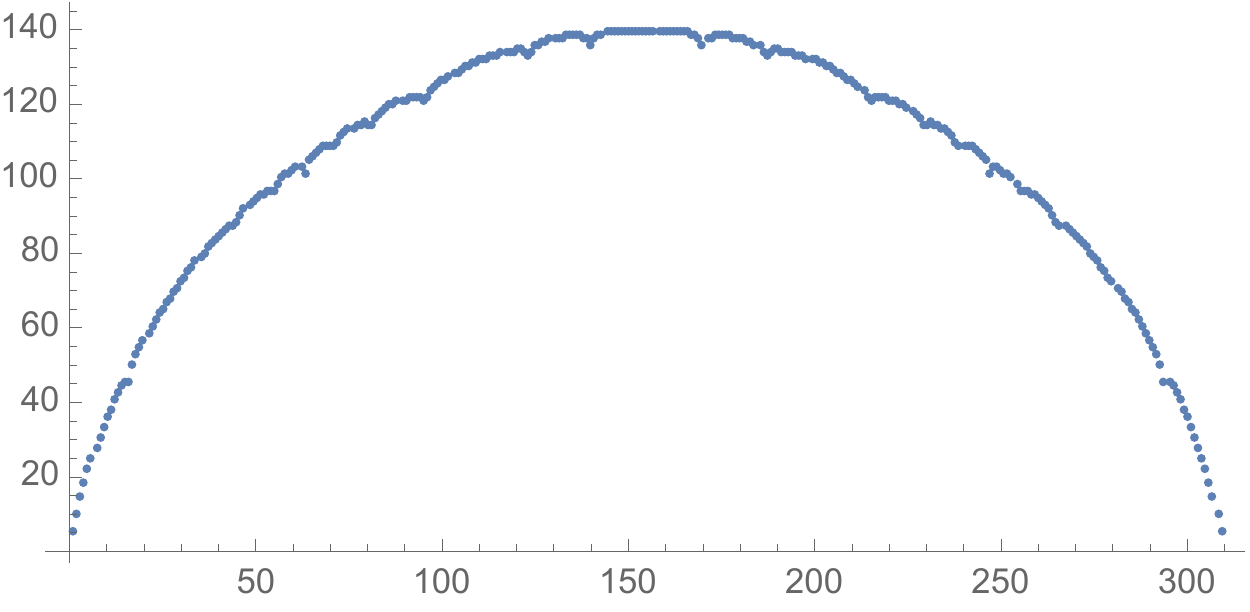}
\caption{\label{alexroots90}Distribution of logs of the absolute values of the coefficients of a random Alexander polynomial of  degree $90,$ no decay}
\end{figure}
\begin{figure}
\centering
\includegraphics[width=0.9\textwidth]{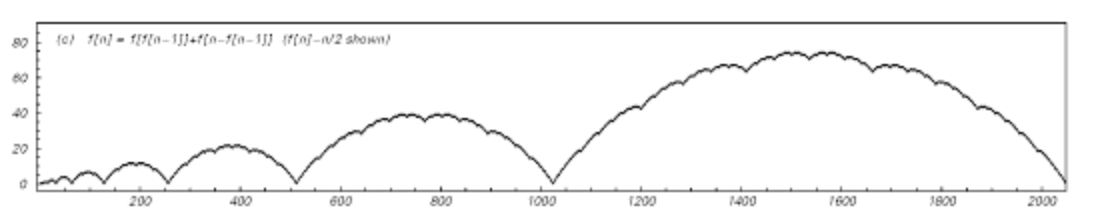}
\caption{\label{wolfig}A seemingly unrelated recursive sequence discovered by Stephen Wolfram}
\end{figure}
\section{Universality?}
The following model (call it $M_2$) is close to the one introduced in \cite{polyknots}: Pick $N$ points $p_1, \dotsc, p_N$ uniformly at random on the unit sphere $\mathbb{S}^2.$ Then, connect point $p_1$ to $p_2, $ point $p_2$ to $p_3,$ and finally $p_N$ to $p_1$ by straight line segments. The resulting closed curve will be almost surely non-self-intersecting, and so we can think of it as our random knot - this model, though not as natural as the one considered above has the advantages of being \emph{different} and also of being easy to model (the knot we have is already polygonal). A natural question is whether the distribution of the coefficients of Alexander polynomials of random Fourier knots bears any resemblance to the distribution we get from $M_2.$ Figures \ref{fig100},\ref{fig120},\ref{fig140},\ref{fig160},\ref{fig180} appear to answer that question unequivocally, and so the distribution of Alexander polynomials does not seem to strongly depend on the moment of random knots used.
\begin{figure}
\centering
\includegraphics[width=0.5\textwidth]{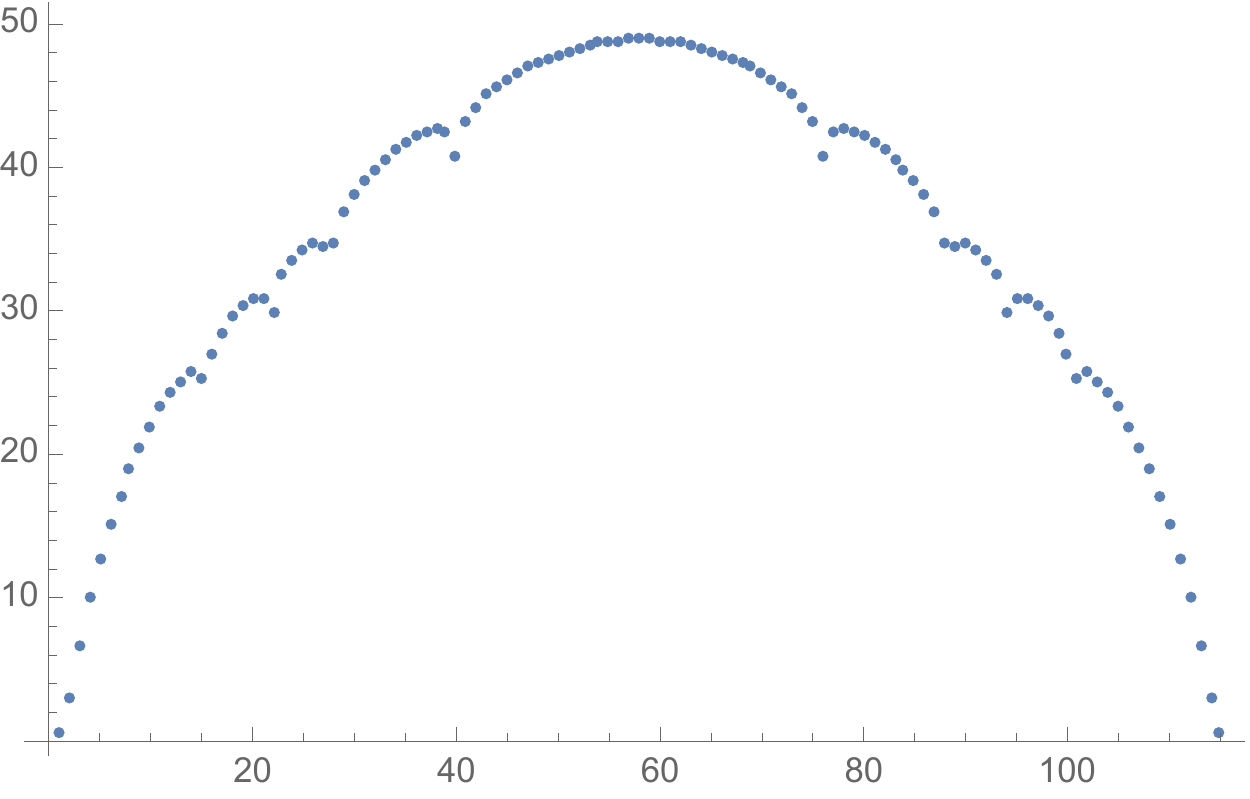}
\caption{\label{fig100}Distribution of logs of the absolute values of the coefficients of a random Alexander polynomial for $100$ random points on $\mathbb{S}^2$}
\end{figure}
\begin{figure}
\centering
\includegraphics[width=0.5\textwidth]{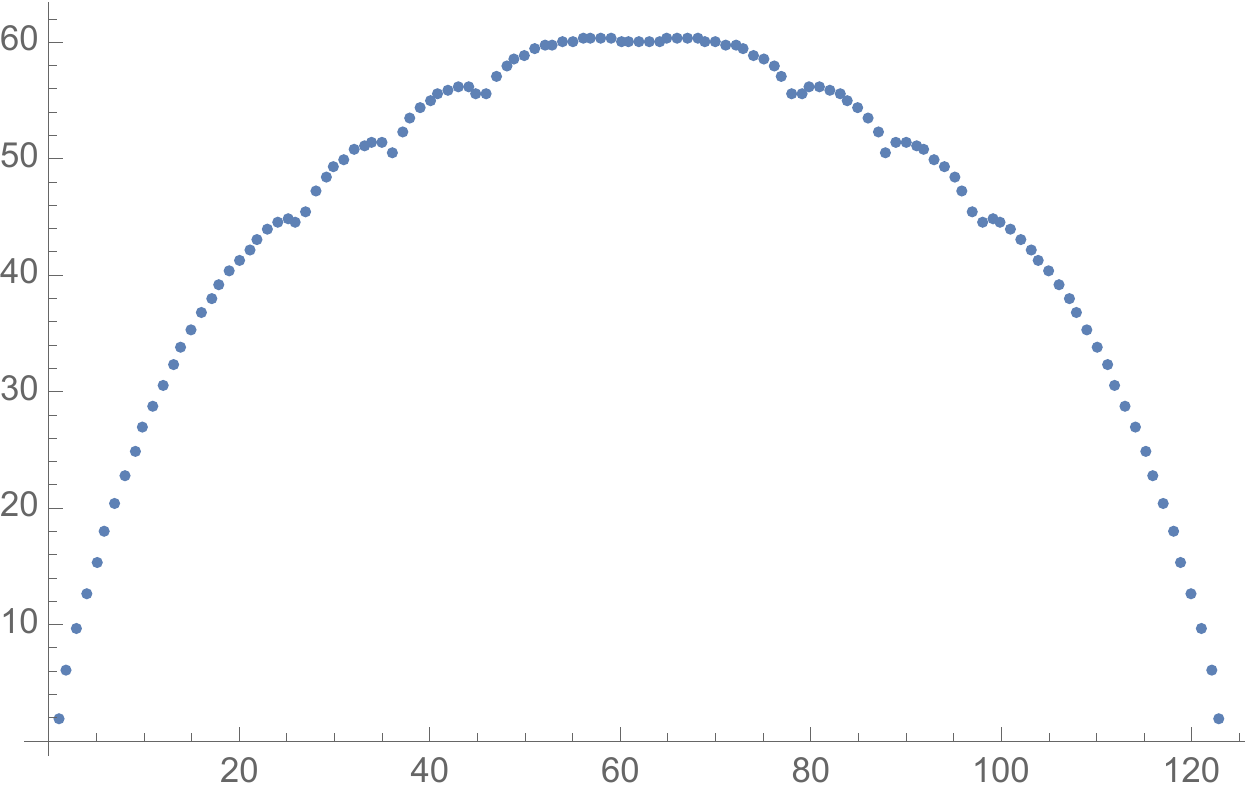}
\caption{\label{fig120}Distribution of logs of the absolute values of the coefficients of a random Alexander polynomial for $120$ random points on $\mathbb{S}^2$}
\end{figure}
\begin{figure}
\centering
\includegraphics[width=0.5\textwidth]{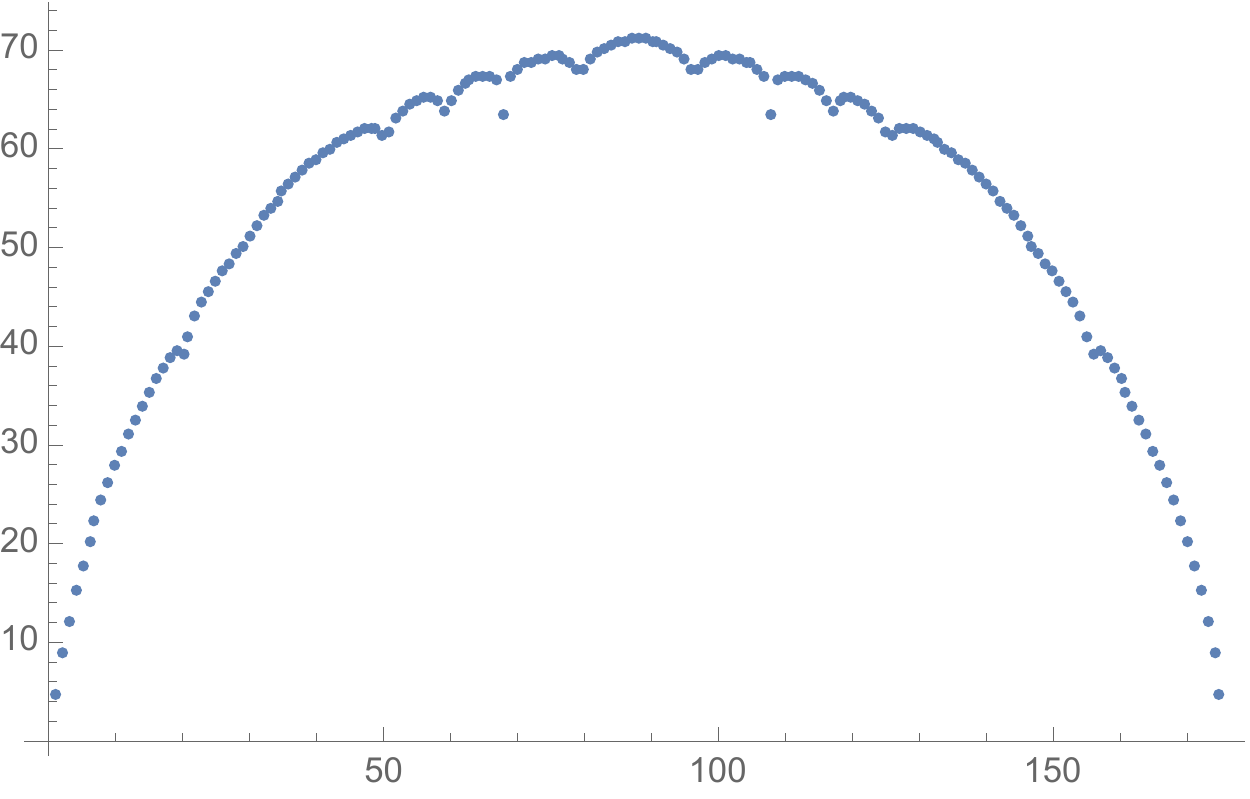}
\caption{\label{fig140}Distribution of logs of the absolute values of the coefficients of a random Alexander polynomial for $140$ random points on $\mathbb{S}^2$}
\end{figure}
\begin{figure}
\centering
\includegraphics[width=0.5\textwidth]{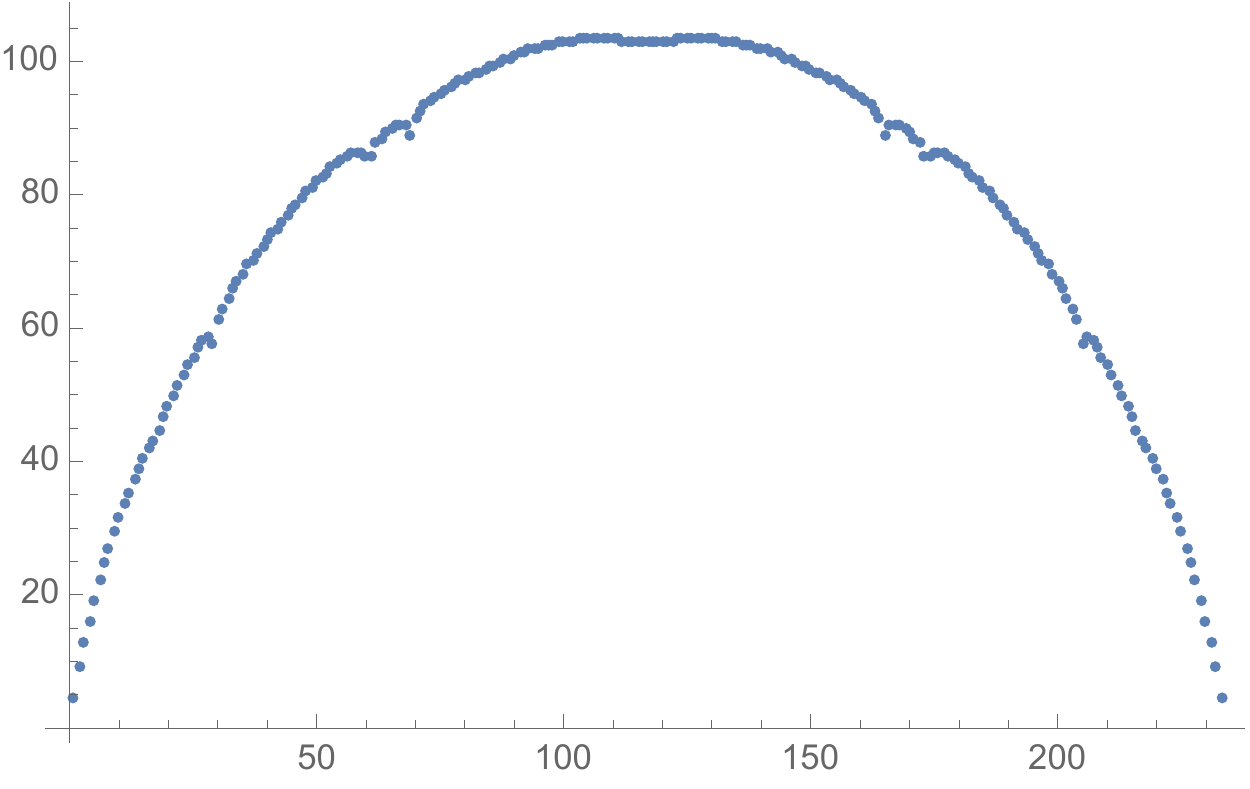}
\caption{\label{fig160}Distribution of logs of the absolute values of the coefficients of a random Alexander polynomial for $160$ random points on $\mathbb{S}^2$}
\end{figure}
\begin{figure}
\centering
\includegraphics[width=0.5\textwidth]{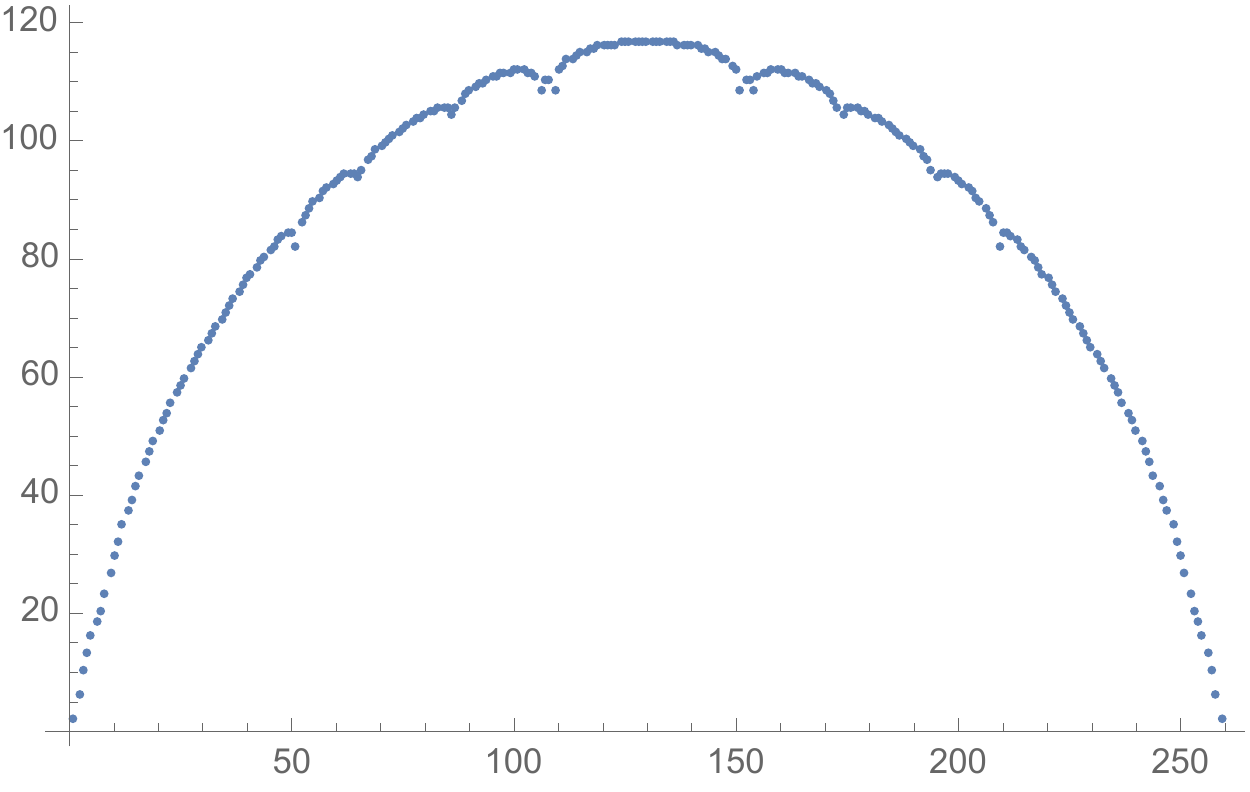}
\caption{\label{fig180}Distribution of logs of the absolute values of the coefficients of a random Alexander polynomial for $180$ random points on $\mathbb{S}^2$}
\end{figure}
\bibliographystyle{plain}
\bibliography{rivinbib}
 \end{document}